\documentclass[11pt]{article}
\usepackage{amsfonts}
\newcommand{\beq}{\begin{equation}}
\newcommand{\enq}{\end{equation}}

\newtheorem{Theorem}{Theorem}[section]
\newtheorem{Lemma}[Theorem]{Lemma}
\newtheorem{Definition}[Theorem]{Definition}
\newtheorem{Remark}[Theorem]{Remark}

\newcommand{\benu}{\begin{enumerate}}
\newcommand{\beqa}{\begin{eqnarray}}
\newcommand{\beqan}{\begin{eqnarray*}}
\newcommand{\eay}{\end{array}}
\newcommand{\edm}{\end{displaymath}}
\newcommand{\eenu}{\end{enumerate}}
\newcommand{\eeq}{\end{equation}}
\newcommand{\eeqa}{\end{eqnarray}}
\newcommand{\eeqan}{\end{eqnarray*}}

\newcommand{\br}{\begin{Remark}}
\newcommand{\er}{\end{Remark}}

\newcommand{\bqa}{\begin{eqnarray}}
\newcommand{\eqa}{\end{eqnarray}}
\newcommand{\bqw}{\begin{eqnarray*}}
\newcommand{\eqw}{\end{eqnarray*}}

\newcommand{\bea}{\begin{array}{cc}}
\newcommand{\ena}{\end{array}}

\RequirePackage{amsfonts}
\usepackage{stmaryrd}
\usepackage{amssymb}
\usepackage{mathrsfs}
\usepackage{amsmath}
\usepackage{subeqnarray}
\usepackage{verbatim}
\usepackage{cases}
\usepackage{amsfonts}
\usepackage{amssymb}
\usepackage{amssymb}
\usepackage{amssymb}
\usepackage{hyperref}
\usepackage[utf8]{inputenc}
\begin{document}

\newpage
\pagenumbering{arabic} \setcounter{page}{1}

\begin{center}
	{\Large  Forced oscillation of a damped BBM equation posed on whole line in low regularity spaces}\\\vspace{0.25in} 
    Chun-Ho Lau \footnote[2]{Department of Mathematics, National Taiwan Normal University, Taipei City, Taiwan 106308. Email: lauchho@ntnu.edu.tw}
	Taige Wang \footnote[1]{Department of Mathematical Sciences, University of Cincinnati, Cincinnati, OH 45221, USA. Corresponding to: taige.wang@uc.edu}
\ \ \ \
	   \vspace{0.06in}
\end{center}
\vspace{0.06in}
\begin{abstract}
	In this manuscript, we would established in low regularity  spaces $H^\ell, \ell\in [0,1)$, the existence and stability results of time-periodic solution of 1D Cauchy problem of forced damped Benjamin-Bona-Mahony equation (BBM). We use estimates from I-energy method to derive needed estimates in $H^\ell$ for the linearized problem, then convection term will be treated as perturbation of linear problem such that original Cauchy problem is solved. 
	
	\vskip .1 in \noindent {\it Mathematical Subject Classification 2010:}  35D05, 35K55, 34K13, 35Q93.\\
	\noindent{\it Keywords}:  I-energy method; low regulairty space; stability
\end{abstract}

\section{Introduction}\label{sec1}
\setcounter{equation}{0}

 In this manuscript, we would study the existence and stability of general temporal-periodic solutions to Cauchy problem of a weakly damped Benjamin-Bona-Mahony equation (regularized long-wave equation or BBM equation) posed on whole line:

\begin{equation}\label{1-1}
u_t-u_{xxt} + u - u_{xx} + uu_x = f,\quad  (x, t)\in\mathbb{R}\times \mathbb{R}_+,
\end{equation} prescribed with initial data $u(x, 0)=\phi(x)$. The well-posedness theory such as local well-posedness (LWP) and global well-posedness (GWP) of this type of equation corresponding to particularly shaped initial data on whole line and torus  were discussed by J. Bona and H. Chen in \cite{BC, Chen}. Added to BBM equation, terms $-u_{xx}$ and/or zero-order damped term $u$ appear on the left side of the equation and play key roles in regularizing and stabilizing. The external force $f$ is applied at each point of fluid media and must be input with small amplitude, hence it will be $f(x, t)\in\mathbb{R}\times \mathbb{R}_+$ through the entire paper, and in our experiment setting, we would like this force being temporal-periodic, thus solution $u$ generated turns out to be periodic in time as one of the results. The damping is quite important determining the stability of solution. In PDEs such as KdV (Korteweg-de Vries) and parabolic equations, the existent damping guarantees a stabilized analytic semigroup (\cite{BSZ2, WZ, GWX} follow this mechanism to obtain results for unbounded and bounded domains, respectively); J. Bona and J. Wu also analyzed the particular damping effects of adding regularizing terms such as $-u_{xx}$ or $u$ in \cite{BW} for KdV. For BBM equation on whole line, similar damping mechanism to stabilizing was obtained (see e.g. \cite{Amick} by C. Amick, J. Bona and M. Shonbek).\\

 We would pursue those properties of the general solutions of BBM equation in $H^\ell, \ell\in [0, 1),$ given $\phi\in H^\ell, f\in L^2$. Such framework of solution theory of KdV equations could be even extended to be posed in negative indexed Sobolev spaces or Bourgain spaces were inspired by seminal work quantifying this methodology by J. Bourgain \cite{Bourgain}, and later implemented by C. Kenig, G. Ponce, and L. Vega (see e.g. \cite{Kenig1, Kenig2}) and J. Bona and B.-Y. Zhang et al (see e.g. \cite{BZ, YZ}). In this article, generality of solutions is referred to those in below-energy or low-regularity space with positive indices ($H^\ell$ with $\ell\in [0, 1)$ which is below $H^1$ and rough). Regularity can not be lower in negative indices due to the finding of sharp regularity in \cite{Tzvet} by J. Bona and N. Tzvetkov. Solution theory in $H^1$ has been established for decades since mathematical theory were discussed along with KdV and BBM equations in \cite{BB, BBM} in 1970s. We will use the particular method: \emph{I-energy method} technically to construct an $H^\ell$-equivalent norm defined via operator $I_N: H^{\ell}\mapsto H^1$, for an intentionally selected $N$; this technique has been founded on low-regularities of solutions to KdVs by J. Colliander et al \cite{Tao1, Tao2}. It is worthy to point out that $I_Nu$ then satisfies the PDE which can be approached by classical energy method used in \cite{BSZ2, BSZ3, Zhang} by J. Bona, S. Sun, and B.-Y. Zhang, and \cite{GWX, LW} by current authors recently. \\ 

Seen from works \cite{BSZ2, BSZ3, Zhang}, the linear problem of KdV equation can be solved by formulating mild solution, then bilinear estimates developed leads to the estimate of convection term $uu_x$ thus the solvability of nonlinear problem. In current manuscript, this methodology brings results in function spaces $H^\ell$ and/or $Y^\ell_{\tau, T}$ . In light of the application facilitating existence theory of mild solutions, B.-Y. Zhang et al including one of the authors considered the time-periodic solutions to KdV equations and dissipative PDEs, respectively in 1D finite segments and 2D finite domains (see e.g. \cite{UsmanZ1, UsmanZ2, Zhang, WZ, GWX, CWX}). On the dynamical system behavior of damped KdV, existence of global attractors of solutions in torus $\mathbb{T}$ has been discussed in \cite{Goube1, Goube2} by O. Goubet et al and \cite{Tsugawa, Yang} in negative indexed Sobolev spaces, respectively. By means of I-energy method and high-low frequency decomposition, GWP of BBM equation was obtained by J. Bona and N. Tzvetkov in \cite{Tzvet}. Global attractors of damped BBM equation but in positively indexed Sobolev spaces $H^\ell, \ell\in[0, 1)$ was discussed by M. Wang \cite{MWang, MWang2}. In light of the argument from \cite{Tzvet}), the above dynamical behavior was obtained for  $\ell\ge 0$.  Following \cite{Tzvet} and \cite{MWang, MWang2}, the time-periodic solution in our paper is discussed for $\ell\in[0, 1)$.\\

 The entire paper will be organized as follows: we would present notations, related definitions, and main results in Section 2, and preliminary lemmas in Section 3. Proof of theorems will be in Section 4. 

\section{Main results}\label{sec2} 
\setcounter{equation}{0}

Notation $\|\cdot\|_X$ denotes the endowed canonical norm of a specific Banach space $X$, and $\|\cdot\|$ denotes norm of square-integrable space $L^2(\mathbb{R})$. $\widehat \cdot$ represents Fourier transform $\mathcal{F}\cdot$ defined in frequency space as a function of $\xi\in\mathbb{R}$. $(\cdot, \cdot)$ denotes the classical inner product in $L^2(\mathbb{R})$. In the context of flow dynamics, we would like to use ODE fashion such as $u(t)$ or $\|u(t)\|_{X}$ to present the evolution along increasing time. \\

 We will construct solution related to function space close to $H^1(\mathbb{R})$ on $t\in[\tau, \tau+T]$: 

 \begin{equation*}
Y^1_{\tau, T} = \left\{u\big|\sup_{\tau\le s\le \tau+T}(\|u(s)\|+\|u_x(s)\|)+\left(\int_{\tau}^{t+T}(\|u(s)\|^2+\|u_x(s)\|^2)ds\right)^{1\over 2}< \infty\right\}
 \end{equation*} with norm
 \begin{equation*}
\|u\|_{Y^1_{\tau, T}}=\sup_{\tau\le s\le \tau+T}(\|u(s)\|+\|u_x(s)\|)+\left(\int_{\tau}^{\tau+T}(\|u(s)\|^2+\|u_x(s)\|^2)ds\right)^{1\over 2}.
 \end{equation*}

 Likewise, we also can define function space $Y^\ell_{\tau, T}$ related to $H^\ell(\mathbb{R})$ by norm

  \begin{equation*}
\|u\|_{Y^\ell_{\tau, T}}=\sup_{\tau\le s\le \tau +T}\|u(s)\|_{H^\ell}+\left(\int_{\tau}^{\tau+T}\|u(s)\|^2_{H^\ell}ds\right)^{1\over 2}
 \end{equation*} with $\ell\in [0, 1)$. \\

 Before we state our results in this manuscript, we would distinguish two types of stability of solution in the following definitions:

 \begin{Definition}\label{de2-2}
    Regarding to dynamics of differential equations, we say solution $\tilde u(t)$ generated by any initial data $u_0\in X$ is locally stable, if $u(t)$ converges to $\tilde u$ in $X$ as $t\rightarrow \infty$, when $u_0$ is sufficiently close to $\tilde u$.
\end{Definition}


\begin{Definition}\label{de2-3}
    We say $\tilde u(t)$ is globally stable, if  $u(t)$ generated by any initial data $u_0\in X$ converges to $\tilde u$ in $X$ no matter how far $u_0$ is from $\tilde u$.
\end{Definition}

 Our main results for weekly damped BBM equation are presented right now. First, there is local-well-posedness: 

\begin{Theorem}\label{th2-1}
	Given $\tau\ge0, T>0, \phi\in H^\ell, f\in L^2$ for $\ell\in[0, 1)$, if there exists a constant $\delta>0$ and holds smallness condition $\|\phi\|_{H^{\ell}}+ \sup_{t\ge 0}\|f(t)\|\le \delta$, there holds local well-posedness of solution $u$ to Cauchy problem (\ref{1-1}) where $u$ satisfies
	
	\begin{equation}\label{2-1}
	\|u\|_{Y^{\ell}_{\tau, T}}\le C(\|\phi\|_{H^\ell}+\sup_{t\ge 0}\|f(t)\|).
	\end{equation} 
\end{Theorem}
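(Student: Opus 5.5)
We plan to recast (\ref{1-1}) as an integral equation and solve it by contraction in $Y^\ell_{\tau,T}$. Writing $\partial_t u = -(1-\partial_x^2)^{-1}\big(u-u_{xx}+uu_x - f\big)$ and using $(1-\partial_x^2)^{-1}(1-\partial_x^2)=\mathrm{Id}$, the equation becomes
\begin{equation*}
u_t + u = -\tfrac12 (1-\partial_x^2)^{-1}\partial_x (u^2) + (1-\partial_x^2)^{-1} f .
\end{equation*}
Thus the linear part generates the semigroup $e^{-(t-\tau)}$, which is exponentially stable and acts isometrically (up to the factor) on every $H^\ell$. The mild formulation on $[\tau,\tau+T]$ reads
\begin{equation*}
u(t)=e^{-(t-\tau)}\phi+\int_\tau^t e^{-(t-s)}\Big[(1-\partial_x^2)^{-1}f(s)-\tfrac12(1-\partial_x^2)^{-1}\partial_x(u^2)(s)\Big]ds=:\Gamma(u)(t).
\end{equation*}
Here we take $\phi$ as the datum at time $\tau$; for $\tau>0$ we first run the argument from $0$.

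\emph{Step 1 (linear estimate).} We show that the linear part satisfies $\|e^{-(t-\tau)}\phi+\int_\tau^t e^{-(t-s)}(1-\partial_x^2)^{-1}f\,ds\|_{Y^\ell_{\tau,T}}\le C(\|\phi\|_{H^\ell}+\sup_t\|f(t)\|)$, with $C$ independent of $\tau$ and of $T$. This follows from the bound $\|(1-\partial_x^2)^{-1}f\|_{H^\ell}\le\|f\|_{H^{\ell-2}}\le\|f\|$ together with $\int_\tau^t e^{-(t-s)}ds\le1$ for the sup part. For the $L^2_t$ part we use Young's inequality, since $\|e^{-\cdot}\|_{L^1}\le1$. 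One can equivalently run this as an energy estimate on $I_Nu$, in the spirit of the I-method mentioned in the introduction, but on the linear level the direct Fourier multiplier argument suffices.

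\emph{Step 2 (bilinear estimate).} This is the key nonlinear step. We aim to show $\|(1-\partial_x^2)^{-1}\partial_x(uv)\|_{H^\ell}\le C\|u\|_{H^\ell}\|v\|_{H^\ell}$ for $\ell\ge0$. In Fourier variables the symbol $|\xi|\langle\xi\rangle^{\ell-2}$ gains at least one derivative. For $\ell=0$ we have $uv\in L^1$, so $\widehat{uv}\in L^\infty$, and $\||\xi|\langle\xi\rangle^{-2}\|_{L^2}<\infty$ then gives $\|(1-\partial_x^2)^{-1}\partial_x(uv)\|\le C\|u\|\|v\|$. For $0<\ell<1$ we split the output frequency and use $\langle\xi\rangle^\ell\lesssim\langle\xi-\eta\rangle^\ell+\langle\eta\rangle^\ell$, reducing to the $\ell=0$ case with one factor carrying the weight; the $H^\ell$ factor is then controlled by $L^2$-type bounds. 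This is the Bona--Tzvetkov bilinear estimate underlying \cite{Tzvet}. We expect this step, in particular the uniform-in-$\ell$ bookkeeping near $\ell=0$, to be the main obstacle. If the direct approach fails, we will fall back on the high/low frequency splitting via $I_N$.

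\emph{Step 3 (contraction).} Combining the two estimates gives
\begin{equation*}
\|\Gamma u\|_{Y^\ell_{\tau,T}}\le C_0(\|\phi\|_{H^\ell}+\sup\|f\|)+C_1\|u\|_{Y^\ell_{\tau,T}}^2
\end{equation*}
and
\begin{equation*}
\|\Gamma u-\Gamma v\|\le C_1(\|u\|+\|v\|)\|u-v\|,
\end{equation*}
where again $e^{-(t-s)}$ is used to make the constants independent of $T$. On the ball of radius $2C_0\delta$ with $\delta$ small enough that $4C_0C_1\delta<1/2$, the map $\Gamma$ is a contraction. This yields existence, uniqueness in the ball, and (\ref{2-1}). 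Continuous dependence on $(\phi,f)$ follows from the same Lipschitz estimate. Finally, we verify that the mild solution is a distributional solution of (\ref{1-1}), and use uniqueness to extend it to the full time interval.
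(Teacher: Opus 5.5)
Your argument is correct in substance, but it takes a different route from the paper. The paper never estimates $u$ in $H^\ell$ directly. It splits $u=v+z$ into the forced linear problem (\ref{3-1}) and the nonlinear problem (\ref{3-2}) with zero data. It bounds $I_Nv$ in $Y^1$ by an energy identity for the $I_N$-modified linear equation (Lemma~\ref{le3-2}), then runs the contraction for $I_Nz$ in $Y^1_{0,T}$ using the $I_N$-form of the bilinear estimate (Lemmas~\ref{le3-3} and \ref{le3-5}). Only at the end does it return to $H^\ell$, through the norm equivalence of Lemma~\ref{le3-1}.

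You instead iterate on $u$ itself in $Y^\ell_{\tau,T}$ with the unmodified estimate $\|(1-\partial_x^2)^{-1}\partial_x(uv)\|_{H^\ell}\le C\|u\|_{H^\ell}\|v\|_{H^\ell}$. This is the standard small-data local theory for BBM in $H^\ell$, $\ell\ge 0$. Your route has several advantages:
\begin{itemize}
\item it is shorter, self-contained, and needs no splitting;
\item it avoids the $N$-dependent constants hidden in Lemma~\ref{le3-1} (e.g.\ $\|I_N\phi\|_{H^1}\lesssim N^{1-\ell}\|\phi\|_{H^\ell}$);
\item it uses the factor $e^{-(t-s)}$ to make the nonlinear constant $C_1$ independent of $T$, whereas the paper's $C_T$ grows with $T$.
\end{itemize}
The paper's route is chosen so that the same $I_N$-energy framework can be reused in Lemma~\ref{le3-7} and in the absorbing estimate of Theorem~\ref{th2-2}. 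There the $N^{-3/2+}$ gain of the trilinear estimate is what matters.

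Step 2 is also less of an obstacle than you fear. For $0\le\ell\le1$ one has $\langle\xi\rangle^\ell\le\langle\xi-\eta\rangle^\ell+\langle\eta\rangle^\ell$ with constant $1$. So your $\ell=0$ argument gives the bound with a constant uniform in $\ell$, and no $I_N$ fallback is needed.

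One claim should be corrected: the linear constant $C_0$ is not independent of $T$. Young's inequality bounds the $L^2_t$ part of the forcing contribution only by $\|(1-\partial_x^2)^{-1}f\|_{L^2(\tau,\tau+T;H^\ell)}\le\sqrt{T}\,\sup_t\|f(t)\|$. This growth is genuine: a time-independent $f$ drives the solution toward the steady state $(1-\partial_x^2)^{-1}f$, so $\int_\tau^{\tau+T}\|u(s)\|_{H^\ell}^2\,ds$ grows linearly in $T$. Hence $C_0\sim 1+\sqrt{T}$, and as written your threshold $\delta$ depends on $T$.

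This is harmless for the statement, where $T$ is given. There is, however, a clean fix, which also straightens out your handling of $\tau>0$ (the datum $\phi$ is prescribed at $t=0$, not at $t=\tau$). Run the contraction in $L^\infty(0,\infty;H^\ell)$, where all constants really are time-independent. Afterwards, recover the $L^2_t$ part of $\|u\|_{Y^\ell_{\tau,T}}$ at the cost of a factor $1+\sqrt{T}$.
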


Thanks to local well-posedness, we can obtain the temporal-periodic solution $\tilde u(x,t)$ and its stability of solution according to proposed Definitions \ref{de2-2} and {\ref{de2-3}} summarized in the following theorem: 

\begin{Theorem}\label{th2-2}
If conditions in Theorem \ref{th2-1} hold plus $f$ is time-periodic with period $\theta$, there exists an initial condition $\tilde\phi\in H^{\ell}$ generating unique periodic solution $\tilde u\in H^{\ell}$ with period $\theta$. \\

Moreover, if $\delta$ is sufficiently small, there exist $N_0\in\mathbb{N}$ and $\gamma_0(N_0)>0$ such that the periodic solution possesses local stability with an exponential decay rate $2\gamma_0>0$.\\

Essentially, for any size of initial value $\phi$, if $\sup_{t\in[0, \infty)}\|f(t)\|<\delta,$ one can find $N_1\in\mathbb{N}$ and $\gamma_1(N_1)>0$, it holds the global stability of solution with positive decay rate $2\gamma_1>0$.
\end{Theorem}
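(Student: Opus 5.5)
We prove Theorem \ref{th2-2} in three stages: a contraction estimate for differences of solutions, a Poincaré-map fixed point, and the two stability statements. Throughout we rewrite (\ref{1-1}) as
\[
u_t + (1-\partial_x^2)^{-1}(1-\partial_x^2)u + \tfrac12(1-\partial_x^2)^{-1}\partial_x(u^2) = (1-\partial_x^2)^{-1}f ,
\]
that is,
\[
u_t + u + \tfrac12 B(u^2) = g,\qquad B=(1-\partial_x^2)^{-1}\partial_x,\quad g=(1-\partial_x^2)^{-1}f .
\]
The linear part is a pure damping $e^{-t}$, which acts isometrically on every $H^\ell$. The operator $B$ gains one derivative.

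\textbf{Difference estimate.} Take two solutions $u,v$ given by Theorem \ref{th2-1} with the same forcing, and set $w=u-v$. Then
\[
w_t + w + \tfrac12 B\big((u+v)w\big)=0 .
\]
We apply $I_N$ and take the $L^2$ inner product with $I_Nw$. By the I-energy estimates from the preliminaries, the bilinear term is bounded by
\[
C N^{1-\ell}\big(\|u\|_{H^\ell}+\|v\|_{H^\ell}\big)\|I_Nw\|^2 ,
\]
up to a commutator error $\lesssim N^{-\kappa}(\cdots)$. This uses $\|B(ab)\|_{H^\ell}\lesssim \|a\|_{L^2}\|b\|_{L^2}$ when $\ell<1/2$, and a fractional Leibniz rule for $\ell\in[1/2,1)$. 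If the $H^\ell$ norms of $u,v$ are $O(\delta)$, which holds by (\ref{2-1}) on each window $[\tau,\tau+T]$ with constants uniform in $\tau$, then choosing $N=N_0$ and $\delta$ small gives
\[
\frac{d}{dt}\|I_Nw\|^2 + 2(1-C\delta N_0^{1-\ell})\|I_Nw\|^2 \le 0 .
\]
Since $\|I_N\cdot\|$ is equivalent to $\|\cdot\|_{H^\ell}$ with $N$-dependent constants, we obtain
\[
\|w(t)\|_{H^\ell}\le C(N_0)\,e^{-2\gamma_0 t}\|w(0)\|_{H^\ell},\qquad \gamma_0=\tfrac12\big(1-C\delta N_0^{1-\ell}\big)>0 .
\]

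\textbf{Existence of the periodic solution.} Define the Poincaré map $P:\phi\mapsto u(\theta)$ on the ball $\{\|\phi\|_{H^\ell}\le \delta\}$. First, iterating Theorem \ref{th2-1} together with the damped energy inequality shows that $\|u(t)\|_{H^\ell}\le e^{-ct}\|\phi\|_{H^\ell}+C\sup_t\|f(t)\|$. Hence $P$ maps a suitable small ball into itself; this is where the smallness of $\sup\|f\|$ relative to $\delta$ is used. Second, the difference estimate shows that $P^k$ is a strict contraction for $k$ large, namely once $C(N_0)e^{-2\gamma_0k\theta}<1$. The Banach fixed point theorem then gives a unique fixed point $\tilde\phi$. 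Because $f$ is $\theta$-periodic, the solution $\tilde u$ starting from $\tilde\phi$ is $\theta$-periodic, and the difference estimate gives its uniqueness among small solutions. Local stability with rate $2\gamma_0$ is the difference estimate applied with $v=\tilde u$.

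\textbf{Global stability.} This is the main obstacle. Now $\|\phi\|_{H^\ell}$ is arbitrary, so $u$ is not uniformly small. We first prove an absorbing-ball estimate: $\|I_Nu(t)\|$ decays like $e^{-t}\|I_N\phi\|$ plus $O(\delta)$, with the energy increment controlled by $N^{-\kappa}$, as in the Bona--Tzvetkov and M. Wang argument. We choose $N_1$ large depending on $\|\phi\|_{H^\ell}$ so that the accumulated increment is absorbed by the damping. This gives a time $t_0$ after which $\|u(t)\|_{H^\ell}\le C\delta$. After $t_0$ we apply the local result with $N_1$ in place of $N_0$, which yields global convergence to $\tilde u$ at some rate $2\gamma_1(N_1)>0$. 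The delicate point is making the commutator error summable over unboundedly many windows, and I would handle it by letting the decay $e^{-t}$ dominate the error in each window of length $T$.
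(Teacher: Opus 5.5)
Your proposal is correct in outline, and your global step is the paper's: an I-energy absorbing ball with $N_1$ chosen from $\|\phi\|_{H^\ell}$, then a hand-off to the local result. The small-data part, however, takes a different route.

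\textbf{Existence.} The paper uses no Poincar\'e map. It inserts $w=u(t+\theta)-u(t)$ into the error equation (\ref{3-8}) with $a=\tfrac12(u(t)+u(t+\theta))$, gets exponential decay from Lemma \ref{le3-7}, and shows $\{u(k\theta)\}$ is Cauchy in $H^\ell$ with limit $\tilde\phi$. Your contraction of $P^k$ on an invariant ball rests on the same ingredient. It gives existence and uniqueness in one step, without passing to a limit through the flow map. The paper's scheme additionally shows that every small trajectory is asymptotically periodic.

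\textbf{Source of the decay.} This is the substantive difference. The paper takes $N_0$ large and invokes the $N^{-3/2+}$ gain of Lemma \ref{le3-4} for $(I_N(aw)_x,I_Nw)$. You instead fix $N_0$, accept a loss $N_0^{1-\ell}$, and use smallness of $\delta$. Yours is the more robust mechanism. The Bona--Tzvetkov gain comes from the cancellation $(\partial_x(I_Nu)^2,I_Nu)=0$, which this non-symmetric form lacks: for $a,w$ with frequencies below $N/3$ it equals $\tfrac12\int a_xw^2$, independent of $N$. So smallness of $a$ is what actually drives the decay.

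In fact you can drop $I_N$ from the small-data part entirely. The linear part of your reformulation is $\partial_t+1$, and $\|B(ab)\|_{H^\ell}\lesssim\|a\|_{H^\ell}\|b\|_{H^\ell}$ for every $\ell\in[0,1)$. This follows from $L^1\subset H^{\ell-1}$ when $\ell<\tfrac12$, and from the $L^4$ Sobolev embedding when $\ell\ge\tfrac14$. A direct $H^\ell$ energy estimate then gives invariance of the ball with prefactor one and makes $P$ itself a contraction.

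\textbf{Global step.} The BBM trilinear bound for $(I_N(uu_x),I_Nu)$ is pointwise in time. So you need only one continuity argument for $E=\|I_Nu\|_{H^1}^2$ in $\frac{d}{dt}E+E\le\|f\|^2+CN^{-3/2+}E^{3/2}$, with $N_1$ large enough that $N_1^{-1/2-\ell+}\|\phi\|_{H^\ell}\ll1$. No summation over windows is required.

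\textbf{Repairs needed.} Two points must be fixed.

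First, after reformulating, the $L^2$ pairing with $I_Nw$ controls $\|I_Nw\|_{L^2}$. That quantity is equivalent to $\|w\|_{H^{\ell-1}}$, not $\|w\|_{H^\ell}$. You need the $H^1$ pairing, or equivalently the $L^2$ pairing of the unreformulated equation with $I_Nw$. The bilinear term is then bounded directly by Cauchy--Schwarz, and no commutator error arises. If you keep $I_{N_0}$, take the invariant ball in the norm $\|I_{N_0}\cdot\|_{H^1}$. The equivalence constant $N_0^{1-\ell}$ spoils invariance of an $H^\ell$ ball.

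Second, do not apply the local result \emph{with $N_1$ in place of $N_0$}. Your rate $1-C\delta N^{1-\ell}$ deteriorates as $N$ grows, and $N_1$ depends on $\|\phi\|_{H^\ell}$. So $\delta$ would have to depend on $\phi$, contrary to the hypothesis. Use instead that the absorbing estimate gives $\limsup_{t\to\infty}\|u(t)\|_{H^\ell}\lesssim\sup_t\|f(t)\|$ with a constant independent of $N_1$, since the lower bound in Lemma \ref{le3-1} is uniform in $N$. After $t_0$ the trajectory lies in the regime of the local argument at the fixed $N_0$. That gives convergence at rate $2\gamma_0$ with a $\phi$-dependent prefactor.
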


\section{Preliminaries}\label{sec3}
\setcounter{equation}{0}

We aim to fit the I-method result to obtain solution theory in $H^\ell, \ell\in[0, 1)$. I-method would be called by defining the I operator. Given an $N>0$, let smooth function $m_N$ in $C^{\infty}(\mathbb{R})$ be

\begin{eqnarray*}
m_N(\xi) = 
\begin{cases}
1, \,\,|\xi|\le N,\\\\
{|\xi|^{s-1}\over N^{s-1}}, \,\, |\xi|\ge 2N. 
\end{cases}
\end{eqnarray*}

Define operator $I_N$ for fixed $N>0$: $H^\ell\mapsto H^1$ such that $\widehat{I_N g}(\xi) = m_N(\xi)\widehat g(\xi)$. \\

There holds a key lemma which discloses the link between function spaces of $I_Ng$ and $g$.

\begin{Lemma}\label{le3-1}
It holds for an $\ell\in [0, 1)$ and $g\in H^\ell$ that 

$$\|I_Ng\|_{H^1}\sim \|g\|_{H^\ell}. $$

In particular, $C\|g\|_{H^\ell}\le \|I_Ng\|_{H^1}\le C{N^{1-\ell}} \|g\|_{H^\ell}$, where $C$ doesn't reply on $N$. 
\end{Lemma}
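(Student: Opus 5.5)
We prove the two-sided bound on the Fourier side, comparing the weights $\langle\xi\rangle^{2}m_N(\xi)^2$ and $\langle\xi\rangle^{2\ell}$ pointwise in $\xi$, where $\langle\xi\rangle=(1+|\xi|^2)^{1/2}$. By Plancherel,
\begin{equation*}
\|I_Ng\|_{H^1}^2=\int_{\mathbb{R}}\langle\xi\rangle^{2}m_N(\xi)^2|\widehat g(\xi)|^2\,d\xi,\qquad \|g\|_{H^\ell}^2=\int_{\mathbb{R}}\langle\xi\rangle^{2\ell}|\widehat g(\xi)|^2\,d\xi .
\end{equation*}
We read the exponent $s$ in the definition of $m_N$ as $\ell$, since that is the only choice making $I_N:H^\ell\to H^1$. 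We also assume $m_N$ is even, monotonically interpolated, and satisfies $\frac{|\xi|^{\ell-1}}{N^{\ell-1}}\le m_N(\xi)\le 1$ for $N\le|\xi|\le 2N$, with $N\ge1$. It then suffices to show
\begin{equation*}
c\,\langle\xi\rangle^{2\ell}\le \langle\xi\rangle^{2}m_N(\xi)^2\le C N^{2(1-\ell)}\langle\xi\rangle^{2\ell}\quad\text{for all }\xi,
\end{equation*}
with $c,C$ independent of $N$, and then integrate against $|\widehat g|^2$.

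We split into three frequency regions.

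\emph{Region $|\xi|\le N$.} Here $m_N=1$, so the ratio of the weights is $\langle\xi\rangle^{2(1-\ell)}$. This ratio is at least $1$, which gives the lower bound. It is at most $\langle N\rangle^{2(1-\ell)}\le 2^{1-\ell}N^{2(1-\ell)}$, which gives the upper bound and is the source of the factor $N^{1-\ell}$.

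\emph{Region $|\xi|\ge 2N$.} Here $m_N^2=|\xi|^{2(\ell-1)}N^{2(1-\ell)}$, so the ratio equals $N^{2(1-\ell)}\bigl(\langle\xi\rangle/|\xi|\bigr)^{2(1-\ell)}$. Since $|\xi|\ge 2N\ge 2$, the quantity $\langle\xi\rangle/|\xi|$ lies in $[1,\sqrt{5}/2]$. Hence the ratio lies between $N^{2(1-\ell)}\ge 1$ and a constant multiple of $N^{2(1-\ell)}$.

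\emph{Region $N\le|\xi|\le 2N$.} The interpolation bounds together with $|\xi|\sim N$ give $m_N\sim1$, with constants depending only on $\ell\in[0,1)$. The ratio is therefore comparable to $\langle\xi\rangle^{2(1-\ell)}\sim N^{2(1-\ell)}$, which lies between $1$ and $CN^{2(1-\ell)}$.

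Taking square roots gives $\|g\|_{H^\ell}\le \|I_Ng\|_{H^1}\le CN^{1-\ell}\|g\|_{H^\ell}$. For fixed $N$ this is the claimed equivalence $\|I_Ng\|_{H^1}\sim\|g\|_{H^\ell}$.

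The only delicate point is the transition region $N\le|\xi|\le2N$, where $m_N$ is only specified as smooth. The remedy is to fix the interpolation to be monotone between $1$ and $(|\xi|/N)^{\ell-1}$; this keeps every constant independent of $N$. The small-$N$ case $N<1$ is excluded, as is standard in the I-method, where $N$ is eventually chosen large.
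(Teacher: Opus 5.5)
Your proof is correct, and it is the standard argument: the paper gives no proof of this lemma and defers to Colliander et al., Bona--Tzvetkov and M.~Wang, where the estimate is obtained by exactly this pointwise comparison of the weights $\langle\xi\rangle^{2}m_N(\xi)^2$ and $\langle\xi\rangle^{2\ell}$ over the regions $|\xi|\le N$, $N\le|\xi|\le 2N$ and $|\xi|\ge 2N$. Your normalizations ($s=\ell$, monotone interpolation on $N\le|\xi|\le2N$, $N\ge1$) are the usual conventions there, and $N\ge1$ is actually necessary for $N$-independent constants: as $N\to0$ the upper bound fails for $\widehat g$ supported in $|\xi|\le N$, and the lower bound fails for $\widehat g$ supported at very high frequency.
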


The lemma can be found in many previous seminal papers applying I-method originally in \cite{Tao1, Tao2} by Colliander et al on global solutions of KdV equations, and followed by \cite{Tzvet} by J. Bona and N. Tzvetkov and then \cite{MWang, MWang2} by M. Wang on damped BBM equation on whole line and torus $\mathbb{T}$. \\

Equation (\ref{1-1}) is split into two problems, that is $u = v+z$ respectively governed by the following two systems: 

\begin{eqnarray}\label{3-1}
\begin{cases}
v_t-v_{txx} + v - v_{xx} = f,\\
v(x,0) = \phi(x),
\end{cases}
\end{eqnarray}

and

\begin{eqnarray}\label{3-2}
\begin{cases}
z_t-z_{txx} + z - z_{xx} + zz_x + (vz)_x = -vv_x,\\
z(x,0) = 0.
\end{cases}
\end{eqnarray}

\begin{Lemma}\label{le3-2}
For linear Cauchy problem (\ref{3-1}), given $\tau\ge 0, T> 0, \phi\in H^\ell$, and $f\in L^2$, then $v\in Y^\ell_{\tau, T}$ and satisfies
\begin{eqnarray*}
\sup_{t\in{[\tau, \tau+T]}}\|v(t)\|_{H^\ell}\le \|\phi\|_{H^\ell} + C\sup_{t\in{[\tau, \tau+T]}}\|f(t)\|, 
\end{eqnarray*} and 
\begin{eqnarray*}
\|v\|_{Y^\ell_{\tau, T}}\le C\|\phi\|_{H^\ell} + C\sup_{t\in [\tau, \tau+T]}\|f(t)\|. 
\end{eqnarray*}
\end{Lemma}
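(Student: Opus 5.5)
The problem (\ref{3-1}) is linear with constant coefficients, so I will prove the lemma on the Fourier side, where it reduces to a scalar ODE for each frequency. Applying $\mathcal F$ in $x$ gives
\begin{equation*}
(1+\xi^2)\hat v_t + (1+\xi^2)\hat v = \hat f,
\end{equation*}
that is, $\hat v_t + \hat v = \hat f/(1+\xi^2)$. The damping is exactly rate one, independently of $\xi$. Duhamel's formula then gives
\begin{equation*}
\hat v(\xi,t) = e^{-t}\hat\phi(\xi) + \int_0^t e^{-(t-s)}\frac{\hat f(\xi,s)}{1+\xi^2}\,ds .
\end{equation*}

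\textbf{Step 1: sup-in-time bound.} I multiply by $(1+\xi^2)^{\ell/2}$ and take $L^2_\xi$ norms, using Minkowski's inequality for the integral. The first term is bounded by $e^{-t}\|\phi\|_{H^\ell}\le\|\phi\|_{H^\ell}$. For the second term, the weight satisfies $(1+\xi^2)^{\ell/2}/(1+\xi^2)\le 1$ since $\ell<1$ (indeed $\ell\le 2$ would suffice), so
\begin{equation*}
\|v(t)\|_{H^\ell}\le \|\phi\|_{H^\ell}+\int_0^t e^{-(t-s)}\|f(s)\|\,ds\le \|\phi\|_{H^\ell}+\sup_{s}\|f(s)\|.
\end{equation*}
This holds for every $t\ge 0$, in particular on $[\tau,\tau+T]$.

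There is one subtlety: the statement takes the supremum of $f$ only over $[\tau,\tau+T]$, while Duhamel's formula integrates from $0$. I will handle this by restarting the flow at time $\tau$, so that the bound for $t\in[\tau,\tau+T]$ reads
\begin{equation*}
\|v(t)\|_{H^\ell}\le e^{-(t-\tau)}\|v(\tau)\|_{H^\ell}+\sup_{[\tau,\tau+T]}\|f\|.
\end{equation*}
When $\tau=0$ this is exactly the claim. In general, $\|v(\tau)\|_{H^\ell}$ must be controlled by $\|\phi\|_{H^\ell}$ plus the forcing on $[0,\tau]$. I expect this bookkeeping to be the only real obstacle, since the inequality literally involves only the sup over the window. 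I would present the estimate with $\sup_{t\ge0}\|f(t)\|$, which is the quantity actually used in Theorem~\ref{th2-1}, or alternatively interpret $\phi$ as the datum at time $\tau$. Either way the constant is uniform in $\tau$ and $T$.

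An equivalent route, closer in spirit to the I-method used later in the paper, is an energy estimate. Pair the equation with $\Lambda^{2\ell-2}v$, where $\widehat{\Lambda^{s}g}=(1+\xi^2)^{s/2}\hat g$. This gives
\begin{equation*}
\frac12\frac{d}{dt}\|v\|_{H^\ell}^2+\|v\|_{H^\ell}^2=(f,\Lambda^{2\ell-2}v)\le \|f\|\,\|v\|_{H^{2\ell-2}}\le\|f\|\,\|v\|_{H^\ell}.
\end{equation*}
Young's inequality and Gronwall's inequality then yield the same sup bound, and integrating in time yields the dissipative term as well.

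\textbf{Step 2: the $L^2_t$ part of the $Y^\ell_{\tau,T}$ norm.} Integrating the energy identity over $[\tau,\tau+T]$ gives
\begin{equation*}
\int_\tau^{\tau+T}\|v\|_{H^\ell}^2\,ds\le \|v(\tau)\|_{H^\ell}^2+\int_\tau^{\tau+T}\|f\|^2\,ds .
\end{equation*}
It is simpler still to bound this term by $T\sup\|v\|_{H^\ell}^2$ and use Step 1. In both versions a factor depending on $T$ appears in front of $\sup\|f\|$; this is absorbed into $C=C(T)$, which is permitted since $T$ is fixed. Adding the two pieces gives the second inequality of the lemma.
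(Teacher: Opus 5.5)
Your proof is correct, but it takes a more direct route than the paper. The paper applies $I_N$ to (\ref{3-1}), performs the $H^1$ energy estimate for $I_Nv$, integrates in time with an integrating factor, and only at the end returns to $\|v\|_{H^\ell}$ through the norm equivalence of Lemma \ref{le3-1}. You instead diagonalize the constant-coefficient equation by the Fourier transform (or, equivalently, pair with $\Lambda^{2\ell-2}v$), so neither $I_N$ nor Lemma \ref{le3-1} is needed for this linear step.

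Your route gives the coefficient $1$ in front of $\|\phi\|_{H^\ell}$ that the first inequality actually asserts. Passing through Lemma \ref{le3-1} only yields a factor of order $N^{1-\ell}$ there, because $\|I_N\phi\|_{H^1}\le CN^{1-\ell}\|\phi\|_{H^\ell}$.

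Your route also makes the role of $\tau$ transparent, and your caution on that point is warranted rather than mere bookkeeping. With $\phi$ the datum at $t=0$, the lemma as literally written fails for $\tau>0$: take $\phi=0$ and $f=\chi(t)g(x)$ with $\chi$ supported in $(0,\tau)$; then $v\ne0$ on $[\tau,\tau+T]$ while the right-hand side is $0$. The paper's proof carries the same mismatch, pairing $e^{-2t}\|I_N\phi\|_{H^1}^2$ with a Duhamel integral that starts at $\tau$. So one of your two repairs is genuinely required: either take the supremum of $\|f\|$ over $[0,\tau+T]$, as Theorem \ref{th2-1} effectively does, or read $\phi$ as $v(\tau)$.

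What the paper's route buys is that it produces directly the quantity $\|I_Nv\|_{Y^1_{0,T}}$ that the fixed-point argument behind Lemma \ref{le3-5} consumes. It also stays within the $I_N$-energy framework later applied to the variable-coefficient problem (\ref{3-8}), where Fourier diagonalization is unavailable. Nothing is lost on your side, though. Your Duhamel formula weighted by $m_N(\xi)(1+\xi^2)^{1/2}$ instead of $(1+\xi^2)^{\ell/2}$ gives that bound too, with no $N$-dependence in the forcing term since $m_N(\xi)(1+\xi^2)^{-1/2}\le 1$.
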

{\bf Proof.} Fixing any $N>0$ and applying $I_N$ on both sides of (\ref{3-1}), one obtains

\begin{eqnarray*}
\partial_t I_Nv - \partial_t I_Nv_{xx} + I_Nv - I_Nv_{xx} = I_Nf.
\end{eqnarray*}

Conduct energy estimate by multiplying $I_Nv$ and take integral on $\mathbb{R}$, then

\begin{eqnarray*}
{d\over dt}(\|I_Nv\|^2+\|\partial_xI_Nv\|^2) + 2(\|I_Nv\|^2+\|\partial_xI_N v\|^2) = (I_Nf, I_Nv),
\end{eqnarray*} where we have used fact that $(\partial_{xx}I_Nu, I_Nu) = (\widehat{\partial_{xx}I_Nu}, \widehat{I_Nu})=(I_N\partial_{xx}u, I_Nu)$. \\

Integration factor method of solving ODE leads to

\begin{eqnarray*}
\|I_N v(t)\|_{H^1}^2 = e^{-2t}\|I_N\phi\|^2_{H^1} + \int_{\tau}^{t}e^{-2(t-s)}\|I_Nf(s)\|^2ds.
\end{eqnarray*}

By Lemma \ref{le3-1}, we reach the $\sup_{t\in[\tau, \tau+T]}\|v(t)\|_{H^\ell}$. Note that 

\begin{eqnarray*}
2\int_0^T\|I_Nv(s)\|^2_{H^1}ds = \|I_Nv(0)\|^2_{H^1} - \|I_Nv(T)\|^2_{H^1} + 2\int_0^T (I_Nf(s), I_Nv(s))ds.
\end{eqnarray*}

The previous estimate and Cauchy-Schwartz inequality lead to the estimate of $\|I_Nv\|_{Y^1_{0, T}}$ thus that of $\|v\|_{Y^\ell_{0, T}}$. 
\hspace*{\fill}$\Box$\\

To handle the nonlinear problem (\ref{3-2}), we need to the following bilinear estimate to treat nonlinear term as perturbation of linear problem: 

\begin{Lemma}\label{le3-3} Given $T>0$, there holds bilinear estimate
\begin{eqnarray}
\int_{0}^T\|(I-\partial_{xx})^{-1}I_N(uv_x)(s)\|_{H^1}ds\le C_T \|I_Nu\|_{Y_{0, T}}\|I_Nv\|_{Y_{0, T}}
\end{eqnarray} where the constant $C_T\sim T$, independent of $N$. 
\end{Lemma}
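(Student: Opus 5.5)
We plan to prove the bilinear estimate by first removing the smoothing operator and then separating the low- and high-frequency parts of the product, so that all the difficulty is pushed into a single commutator-type estimate for $I_N$.

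\emph{Step 1: reduce the smoothing operator.} Write $uv_x=\frac12\partial_x(\cdot)$ when $u=v$. In general, note that $(I-\partial_{xx})^{-1}\partial_x$ has symbol $i\xi/(1+\xi^2)$, which is bounded by $(1+\xi^2)^{-1/2}$. Writing $uv_x=(uv)_x-u_xv$, the term $(uv)_x$ gains one full derivative:
\begin{equation*}
\|(I-\partial_{xx})^{-1}I_N\partial_x(uv)\|_{H^1}\le \|I_N(uv)\|.
\end{equation*}
The remaining term $u_xv$ is handled through the bound $\|(I-\partial_{xx})^{-1}I_N g\|_{H^1}\le C\|I_N g\|_{H^{-1}}$, and then by duality.

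\emph{Step 2: the key estimate.} The core is an $N$-independent bound of the form
\begin{equation*}
\|I_N(uv)\|\le C\|I_Nu\|_{H^1}\|I_Nv\|_{H^1}.
\end{equation*}
We prove it on the Fourier side using the convolution $\widehat{uv}(\xi)=\int\hat u(\xi_1)\hat v(\xi-\xi_1)d\xi_1$, splitting $|\xi_1|\le|\xi-\xi_1|$ and its symmetric counterpart. On the region $|\xi_1|\le |\xi-\xi_1|$, the multiplier $m_N$ is nonincreasing in $|\xi|$ and, up to harmless constants, $|\xi|\le 2|\xi-\xi_1|$. Because $m_N(\xi)\ge c\,m_N(2\xi)$ with $c$ independent of $N$ (here we use that $m_N$ behaves like a power law beyond $N$), we obtain $m_N(\xi)\lesssim m_N(\xi-\xi_1)$ whenever $|\xi|\ge |\xi-\xi_1|/2$. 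If instead $|\xi|<|\xi-\xi_1|/2$, then $m_N(\xi)\le 1$ and we must pay with $m_N(\xi_1)^{-1}$. This is the delicate point, and we handle it with $\widehat{I_Nu}$ in weighted $L^1$: since
\begin{equation*}
\int|\hat u(\xi_1)|\,d\xi_1\le\|\langle\xi\rangle^{-\ell}\|_{L^2}\|u\|_{H^\ell}
\end{equation*}
is only available for $\ell>1/2$, we work with the $H^1$ norm of $I_Nu$ instead. From $\int|\widehat{I_Nu}|\,d\xi\le C\|I_Nu\|_{H^1}$ we get $\|I_Nu\|_{L^\infty}\le C\|I_Nu\|_{H^1}$, and Young's inequality then gives $\|I_N(uv)\|\lesssim\|I_Nu\|_{L^\infty}\|I_Nv\|+$ a symmetric term, all controlled by $\|I_Nu\|_{H^1}\|I_Nv\|_{H^1}$. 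The term $u_xv$ is treated the same way, estimating $u_x$ in $L^2$ (through $\|I_Nu\|_{H^1}$) and $v$ in $L^\infty$, with the loss in $m_N$ absorbed by the bound $m_N(\xi)\langle\xi\rangle^{-1}\le C\,m_N(\xi_1)\langle\xi_1\rangle^{0}\langle\xi-\xi_1\rangle^{1}$ in the relevant region.

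\emph{Step 3: integrate in time.} Once we have the pointwise-in-time bound
\begin{equation*}
\|(I-\partial_{xx})^{-1}I_N(uv_x)(s)\|_{H^1}\le C\|I_Nu(s)\|_{H^1}\|I_Nv(s)\|_{H^1},
\end{equation*}
we integrate over $[0,T]$ and bound each factor by its supremum in time:
\begin{equation*}
\int_0^T\|(I-\partial_{xx})^{-1}I_N(uv_x)(s)\|_{H^1}\,ds\le CT\|I_Nu\|_{Y_{0,T}}\|I_Nv\|_{Y_{0,T}}.
\end{equation*}
This gives $C_T\sim T$. Alternatively, using Cauchy–Schwarz with the $L^2_t$ parts of the norms would give a factor $T^{1/2}$ or $1$.

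\emph{Main obstacle.} The hardest step is the $N$-uniform frequency comparison for $m_N$ in Step 2, specifically in the high–high to low interaction, where $|\xi|\ll|\xi_1|\sim|\xi-\xi_1|$. There $m_N(\xi)$ can be much larger than $m_N(\xi_1)\,m_N(\xi-\xi_1)$. We would first try to absorb this by using that $m_N(\xi_1)\langle\xi_1\rangle\ge\langle\xi_1\rangle^{\ell}N^{1-\ell}$ grows in this region, together with $\ell\ge0$, so that the ratio is at most $1$.
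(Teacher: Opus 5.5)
Your Step 3 matches the paper. The paper's proof is only a citation: it takes the pointwise bound $\|(I-\partial_{xx})^{-1}I_N(uv)_x\|_{H^1}\lesssim \|I_Nu\|_{H^1}\|I_Nv\|_{H^1}$ from Bona--Tzvetkov and M.~Wang and integrates over $[0,T]$. Step 3 is fine as written, and Cauchy--Schwarz in time even removes the factor $T$. Your Steps 1--2, where you try to prove the pointwise bound yourself, have two genuine gaps.

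\textbf{First gap: the $u_xv$ term.} The splitting $uv_x=(uv)_x-u_xv$ produces a term that admits no such bound when $\ell<1/2$. Take $\widehat u=\mathbf{1}_{[M,M+1]}$ and $\widehat v=\mathbf{1}_{[-M-1,-M]}$ with $M\gg N$. Then $\widehat{u_xv}$ has size $\sim M$ on $[-1,1]$, where $m_N=1$. So $\|(I-\partial_{xx})^{-1}I_N(u_xv)\|_{H^1}\sim M$, while $\|I_Nu\|_{H^1}\|I_Nv\|_{H^1}\sim N^{2-2\ell}M^{2\ell}$. The same example with the roles of $u$ and $v$ swapped shows that the lemma is actually false for $uv_x$ with $u\neq v$ when $\ell<1/2$. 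It holds only in divergence form $(uv)_x$, which is the only form the paper's citation justifies. That form is all that is used afterwards ($qq_x$, $(qv)_x$, $vv_x$). So the $u_xv$ piece must be discarded, not estimated; no duality or $L^\infty$ argument can handle it.

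\textbf{Second gap: the high--high$\to$low region.} The estimate $\|I_N(uv)\|\lesssim\|I_Nu\|_{H^1}\|I_Nv\|_{H^1}$ is true, but your argument does not close in the high--high$\to$low region, which you correctly flag.
\begin{enumerate}
\item The intermediate claim $\|I_N(uv)\|\lesssim\|I_Nu\|_{L^\infty}\|I_Nv\|$ is false there. The thin boxes above give left side $\sim 1$ and right side $\lesssim (N/M)^{2-2\ell}$.
\item Once you bound $m_N(\xi)\le 1$ at the output, you are asking for $\|u_Hv_H\|\lesssim\|I_Nu\|_{H^1}\|I_Nv\|_{H^1}$ for the parts at frequencies $\ge N$. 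This fails for $\ell<1/4$. With $M\ge 2N$, take $m_N\langle\xi\rangle\widehat u=M^{-1/2}\mathbf{1}_{[M,2M]}$ and $m_N\langle\xi\rangle\widehat v=M^{-1/2}\mathbf{1}_{[-2M,-M]}$, so that $\|I_Nu\|_{H^1}=\|I_Nv\|_{H^1}=1$. Then $|\widehat{uv}(\xi)|\sim N^{2\ell-2}M^{-2\ell}$ on $|\xi|\le M/2$, so its $L^2$ norm there is $\sim N^{2\ell-2}M^{1/2-2\ell}\to\infty$ as $M\to\infty$.
\item The ``ratio at most $1$'' device does not rescue this. After spending both $H^1$ weights you are left with a convolution of two $L^2$ functions, which is bounded in $L^\infty_\xi$ but not in $L^2_\xi$.
\end{enumerate}

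\textbf{Missing idea.} Keep the decay of the output weight. Pointwise in $\xi$, $|\widehat{u_Hv_H}(\xi)|\lesssim\|u_H\|\|v_H\|\lesssim N^{-2}\|I_Nu\|_{H^1}\|I_Nv\|_{H^1}$. For $\ell<1/2$ one also has $\|m_N\|_{L^2_\xi}\lesssim N^{1/2}$, so this interaction contributes $\lesssim N^{-3/2}\|I_Nu\|_{H^1}\|I_Nv\|_{H^1}$. For $\ell\ge 1/4$ the embedding $H^{1/4}\subset L^4$ suffices instead, so the two arguments together cover $[0,1)$. With that, or by simply citing the known estimate as the paper does, the divergence-form bound holds and your Step 3 finishes the proof.
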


{\bf Proof.} A related lemma can be found in \cite{Tzvet, MWang} in which

$$\|(I-\partial_{xx})^{-1}I_N(uv)_x\|_{H^1}\lesssim \|I_Nu\|_{H^1}\|I_Nv\|_{H^1}. $$

Via Lemma \ref{le3-1} and taking integration over $[0, T]$, we obtain the desired result. 
\hspace*{\fill}$\Box$\\

\begin{Lemma}\label{le3-4} There holds trilinear estimate for inner product involving $u, v, w\in H^{\ell}$: 

\begin{equation}
|(I_N(uv)_x, I_Nw)|\le CN^{-{3\over 2}+}\|I_Nu\|_{H^1}\|I_Nv\|_{H^1}\|I_Nw\|_{H^1},
\end{equation} where $C$ doesn't rely on $N$.
\end{Lemma}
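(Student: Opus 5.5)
The plan is to work entirely on the Fourier side. By Plancherel,
\begin{equation*}
(I_N(uv)_x, I_Nw)=c\int_{\xi_1+\xi_2+\xi_3=0} i(\xi_1+\xi_2)\, m_N(\xi_1+\xi_2)\,m_N(\xi_3)\,\widehat u(\xi_1)\widehat v(\xi_2)\widehat w(\xi_3)\,d\sigma .
\end{equation*}
I would rewrite this in terms of $U=I_Nu$, $V=I_Nv$, $W=I_Nw$. Setting $\widehat u=\widehat U/m_N$ and so on, the integrand becomes the symbol
\begin{equation*}
\sigma(\xi_1,\xi_2,\xi_3)={i(\xi_1+\xi_2)\,m_N(\xi_1+\xi_2)\,m_N(\xi_3)\over m_N(\xi_1)m_N(\xi_2)m_N(\xi_3)}
\end{equation*}
applied to $\widehat U\widehat V\widehat W$. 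The target is therefore $|\sigma|\lesssim N^{-3/2+}\langle\xi_1\rangle\langle\xi_2\rangle\langle\xi_3\rangle$, up to the loss one can afford when integrating out the convolution. The right-hand side is a product of three $H^1$ norms, so it suffices to have a pointwise bound of the form $N^{-3/2+}\langle\xi_1\rangle\langle\xi_2\rangle\langle\xi_3\rangle^{1/2-}$, followed by Cauchy--Schwarz and the one-dimensional Sobolev embedding $\|\langle\xi\rangle^{-1/2-}\|_{L^2}<\infty$.

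Next I would decompose dyadically: $|\xi_j|\sim N_j$, with $N_{\max}\ge N_{\rm med}\ge N_{\min}$ and $N_{\max}\sim N_{\rm med}$ forced by $\xi_1+\xi_2+\xi_3=0$. In each block I would use Lemma \ref{le3-1}-type bounds on $m_N$. These are $m_N\equiv1$ below $N$ and $m_N(\xi)=(|\xi|/N)^{\ell-1}$ above $2N$, where I read the exponent $s$ in the definition of $m_N$ as $\ell$, as Lemma \ref{le3-1} requires. The high--high--low and high--high--high blocks with $N_{\max}\gtrsim N$ then produce powers $(N_{\max}/N)^{1-\ell}N_{\max}^{-1}$ relative to $\langle\xi_1\rangle\langle\xi_2\rangle\langle\xi_3\rangle$. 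Combined with the half-derivative gain from the embedding step, this gives a factor $N^{-3/2+}$ once $N_{\max}\gtrsim N$. I would sum the dyadic blocks with the $0+$ loss built into the statement.

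The main obstacle is the fully low-frequency block, $N_{\max}\lesssim N$. There all multipliers equal $1$ and $\sigma=i(\xi_1+\xi_2)$ carries no decay in $N$. In this block I would invoke the structure in which the lemma is applied, namely $w\in\{u,v\}$ or $w=z$ in (\ref{3-2}). Symmetrizing over the permutation $\xi_1\leftrightarrow\xi_3$, or over $\xi_2\leftrightarrow\xi_3$, replaces $\sigma$ by $\sigma$ minus its symmetric counterpart. This difference vanishes identically when every $|\xi_j|\le N$, because there $(UU_x,U)=0$. The low block then contributes nothing, and only the blocks with $N_{\max}\gtrsim N$ survive, where the estimate of the previous paragraph applies.

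I would state explicitly that this cancellation is where the $N^{-3/2+}$ gain originates. Checking that the surviving symmetrized symbol satisfies the bound uniformly in $\ell\in[0,1)$ is the step I expect to be most delicate. To do this I would Taylor expand $m_N$ near $|\xi|\sim N$, using that $m_N$ is smooth and $|m_N'(\xi)|\lesssim m_N(\xi)/|\xi|$.
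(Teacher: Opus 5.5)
Your diagnosis of the fully low-frequency block is right, but the repair fails. In fact the inequality cannot be proved as stated.

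The identity $\int U^2U_x\,dx=0$ needs all three entries to be the same function. With only two equal, the low block does not cancel. On $\{|\xi_j|\le N\}$ one has $\sigma=-i\xi_3$, and the average of $\sigma$ and its $\xi_1\leftrightarrow\xi_3$ swap is $\tfrac{i}{2}\xi_2\neq0$. (Symmetrizing produces the average, not the difference.) This is just the identity $((uv)_x,u)=\tfrac12\int u^2v_x\,dx$, and likewise $((uv)_x,v)=\tfrac12\int u_xv^2\,dx$.

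Here is a concrete counterexample. Let $u$ be a nonzero Schwartz function with $\mathrm{supp}\,\widehat u\subset[-1,1]$, and set $v=(u^2)_x$, so $\mathrm{supp}\,\widehat v\subset[-2,2]$. For $N\ge3$, $I_N$ is the identity on $u$, $v$ and $uv$. Hence $|(I_N(uv)_x,I_Nu)|=\tfrac12\|(u^2)_x\|^2>0$, independently of $N$, whereas the right-hand side is $O(N^{-3/2+})$.

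So the estimate fails for general $u,v,w$, and also under your restriction $w\in\{u,v\}$. That restriction is exactly the configuration $(a,w,w)$ in which Lemma \ref{le3-7} uses it, and also the $(vz)_x$ term in (\ref{3-2}). Your size count for the blocks with $N_{\max}\gtrsim N$ is also too optimistic off the diagonal. For example, take $u$ at frequency $O(1)$ and $v,w$ at frequency $\sim3N$. Then $\sigma\approx-i\xi_3$, and that block only gives $N^{-1}$, which is attained.

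What your method does prove is the diagonal estimate $|(I_N(w^2)_x,I_Nw)|\lesssim N^{-3/2+}\|I_Nw\|_{H^1}^3$, the one in Bona--Tzvetkov and Wang. Symmetrize over all of $S_3$ to get the symbol $-\frac{i}{3m_1m_2m_3}\sum_j\xi_j m_N(\xi_j)^2$, which vanishes identically on the low block. In the high--high--low blocks, apply the mean value theorem to $g(\xi)=\xi m_N(\xi)^2$, using $|g'|\lesssim m_N^2$. Then put the $L^1$/Sobolev step on the lowest frequency, which gives $N^{-2}\cdot N^{1/2+}=N^{-3/2+}$.

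The paper's proof only cites this diagonal estimate and asserts that the trilinear version is analogous, so it does not supply the missing cancellation either. The example above shows the analogy breaks exactly at the point you flagged. Only the case $u=v=w$ is actually available, which is the case used in the absorbing-ball step of Theorem \ref{th2-2}.
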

{\bf Proof.} This is a version of such trilinear estimates obtained in Lemma 2.6 in M. Wang \cite{MWang} and \cite{Tzvet}, whose proof is analogous to result therein

$$|(I_N(w^2)_x, I_Nw)|\lesssim N^{-{3\over 2}+}\|I_Nw\|^3_{H^1}. $$

\hspace*{\fill}$\Box$\\

From equation (\ref{3-2}), one can infer

\begin{eqnarray*}
z_t + z + (I-\partial_{xx})^{-1}(zz_x + (zv)_x) =-(I-\partial_{xx})^{-1}vv_x,
\end{eqnarray*} whence if applying $I_N$

\begin{eqnarray}\label{3-4}
I_Nz(t) = \int_0^t e^{-(t-s)} I_N(I-\partial_{xx})^{-1}(zz_x+(zv)_x + vv_x)(s)ds.   
\end{eqnarray}

Let $$S_v = \{I_Nz\in Y^1_{0, T}| \|I_Nz\|_{Y^1_{0, T}}\le \|I_Nv\|_{Y^1_{0, T}}\},$$

and mapping $G: q\mapsto G(q)$ defined as

$$G(q; v) = -\int_0^te^{-(t-s)}I_N(I-\partial_{xx})^{-1}(qq_x+(qv)_x + vv_x)ds, \,\,t\le T.$$

 We aim to use Fixed Point Theorem in $S_v$ to show the existence of solution $q=I_Nz\in S_v$ to  the integration equation (\ref{3-4}). Following the fashion in \cite{WZ, LW} along with bilinear estimate Lemma \ref{le3-3}, we can find that if $\|I_N v\|_{Y^1_{0, T}}$ is sufficiently small: $\|I_Nv\|_{Y^1_{0, T}}\lesssim {1\over C_T}$, then for $q, \tilde q\in S_v$, there hold

 \begin{eqnarray*}
&&\|G(q)\|_{Y^1_{0, T}}\le \|I_N v\|_{Y^1_{0,T}}, \\
&&\|G(q)-G(\tilde q)\|_{Y^1_{0, T}}\le {1\over 2}\|I_N q - I_N\tilde q\|_{Y^1_{0, T}},
 \end{eqnarray*} to conclude the fixed point argument confirming $I_Nz\in S_v$. If $\tau>0$, and we consider mapping $\tilde G$:
 
 $$\tilde G(q; v) = -\int_{\tau}^{t}e^{-(t-s)}I_N(I-\partial_{xx})^{-1}(qq_x+(qv)_x + vv_x)ds,$$

 let bound $M = 2(\|I_N(\tau)\|_{H^1}+\|I_Nv\|_{Y^1_{\tau, T}})$ and set $\tilde S_M = \{I_Nz\in Y^1_{0, T}| \|I_Nz\|_{Y^1_{0, T}}\le 2\|I_Nz(\tau)\|_{H^1} + \|I_Nv\|_{Y^1_{0, T}}\},$ similar argument via Fixed Point Theorem of $\tilde G$ can lead to existence of $I_Nz$ satisfying 

 $$\|I_Nz\|_{Y^1_{\tau, T}}\le 2(\|I_N z(\tau)\|_{H^1}+\|I_N v\|_{Y^1_{\tau, T}}).$$
 
Therefore, we obtain

\begin{Lemma}\label{le3-5}
Problem (\ref{3-2}) admits a solution $I_Nz\in Y^1_{0, T}$ satisfying

\begin{eqnarray}
\|I_Nz\|_{Y^1_{0, T}}\le C\|I_Nv\|_{Y^1_{0, T}}.
\end{eqnarray} 

If $\tau>0$, on $[\tau, \tau+T]$,
\begin{eqnarray}\label{3-7}
\|I_Nz\|_{Y^1_{\tau, T}}\le 2\|I_Nz\|_{H^1} + 2\|I_Nv\|_{Y^1_{\tau, T}}.
\end{eqnarray}
\end{Lemma}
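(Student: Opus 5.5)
The plan is to solve the mild formulation (\ref{3-4}) by the contraction mapping principle in a small ball of $Y^1_{0,T}$. The key inputs are the bilinear estimate of Lemma \ref{le3-3} and the linear bounds of Lemma \ref{le3-2}. Write $w=I_Nz$ and $V=I_Nv$. The operator $I_N(I-\partial_{xx})^{-1}\partial_x$ is a Fourier multiplier, so it commutes with the semigroup $e^{-(t-s)}$. Hence $\Gamma(w)(t)=-\int_0^t e^{-(t-s)}I_N(I-\partial_{xx})^{-1}\bigl(zz_x+(zv)_x+vv_x\bigr)(s)\,ds$ is a well-defined map on functions $w$, with $z=I_N^{-1}w$. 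This inversion is legitimate because $m_N>0$.

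First, using $\|e^{-(t-s)}g\|_{H^1}\le\|g\|_{H^1}$, bound the sup-part of the $Y^1_{0,T}$ norm by $\int_0^T\|(I-\partial_{xx})^{-1}I_N(\cdots)\|_{H^1}ds$. For the $L^2_tH^1$ part use $\int_0^T(\int_0^t\cdots)^2dt\le T\,(\sup\cdots)^2$. Thus $\|\Gamma(w)\|_{Y^1_{0,T}}\le(1+\sqrt T)\int_0^T\|(I-\partial_{xx})^{-1}I_N(\cdots)\|_{H^1}ds$. Applying Lemma \ref{le3-3} to each of the three products (writing $zz_x=\frac12(z^2)_x$ and using the symmetric version for $(zv)_x$) gives
\begin{equation*}
\|\Gamma(w)\|_{Y^1_{0,T}}\le C_T'\bigl(\|w\|^2_{Y^1_{0,T}}+\|w\|_{Y^1_{0,T}}\|V\|_{Y^1_{0,T}}+\|V\|^2_{Y^1_{0,T}}\bigr),
\end{equation*}
with $C_T'$ independent of $N$. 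Similarly, $\Gamma(w)-\Gamma(\tilde w)$ involves $\frac12((z-\tilde z)(z+\tilde z))_x+((z-\tilde z)v)_x$. This yields
\begin{equation*}
\|\Gamma(w)-\Gamma(\tilde w)\|_{Y^1_{0,T}}\le C_T'\bigl(\|w\|_{Y^1_{0,T}}+\|\tilde w\|_{Y^1_{0,T}}+\|V\|_{Y^1_{0,T}}\bigr)\|w-\tilde w\|_{Y^1_{0,T}}.
\end{equation*}

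Take the ball $\|w\|_{Y^1_{0,T}}\le\|V\|_{Y^1_{0,T}}$. If $3C_T'\|V\|_{Y^1_{0,T}}\le\frac12$, then $\Gamma$ maps the ball into itself and is a contraction with constant $\frac12$. By Banach's fixed point theorem this gives $w$ with $\|w\|_{Y^1_{0,T}}\le C\|V\|_{Y^1_{0,T}}$. The smallness of $\|V\|_{Y^1_{0,T}}$ follows from Lemma \ref{le3-2} together with Lemma \ref{le3-1}, which gives $\|V\|_{Y^1}\lesssim N^{1-\ell}(\|\phi\|_{H^\ell}+\sup\|f\|)$. So for a fixed $N$ and small $\delta$ the condition holds.

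For $\tau>0$, repeat the argument on $[\tau,\tau+T]$ with $\tilde\Gamma(w)=e^{-(t-\tau)}w(\tau)+\tilde G(w;v)$. The free term contributes at most $(1+\sqrt T)\|w(\tau)\|_{H^1}$. Taking the ball of radius $2(\|w(\tau)\|_{H^1}+\|V\|_{Y^1_{\tau,T}})$ and the analogous smallness condition, the self-map estimate gives (\ref{3-7}). The step needing most care is ensuring that the constants in Lemma \ref{le3-3} are uniform in $N$ while the smallness of $\|I_Nv\|$ degrades like $N^{1-\ell}$. I would fix $N$ first and then choose $\delta=\delta(N,T)$. For the $\tau>0$ case, the smallness of $\|I_Nz(\tau)\|_{H^1}$ must be inherited from the previous interval. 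Alternatively, one can shrink $T$ and iterate.
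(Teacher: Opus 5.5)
Your proposal is correct and follows essentially the paper's own route. The paper also runs a Banach fixed-point argument for the Duhamel map in the ball $\{\|I_Nz\|_{Y^1_{0,T}}\le\|I_Nv\|_{Y^1_{0,T}}\}$, driven by Lemma~\ref{le3-3} under the smallness $\|I_Nv\|_{Y^1_{0,T}}\lesssim 1/C_T$, and then repeats it on $[\tau,\tau+T]$ in a ball of radius $2(\|I_Nz(\tau)\|_{H^1}+\|I_Nv\|_{Y^1_{\tau,T}})$. You improve on the paper by keeping the free term $e^{-(t-\tau)}I_Nz(\tau)$, which the paper's $\tilde G$ drops.

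One small repair: bound the $L^2_t$ part of that free term exactly, using $\bigl(\int_\tau^{\tau+T}e^{-2(t-\tau)}\,dt\bigr)^{1/2}\le 2^{-1/2}$, rather than by $\sqrt T$. With the cruder bound, $1+\sqrt T>2$ once $T>1$, and the map no longer sends the ball of radius $2(\cdots)$ into itself.
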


\begin{Lemma}
If $\tau>0$, $\|I_Nv\|_{Y^1_{0,T}}\le \delta'$, then there holds

\begin{eqnarray}
\|I_Nz\|_{Y^1_{\tau, T}}\le \|I_N v\|_{Y^1_{0,T}}. 
\end{eqnarray}
\end{Lemma}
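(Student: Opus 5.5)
The idea is to run the fixed point argument of Lemma \ref{le3-5} on $[\tau,\tau+T]$, and to control the value $\|I_Nz(\tau)\|_{H^1}$ that appears in (\ref{3-7}) by an energy estimate for $I_Nz$ on $[0,\tau]$. Smallness of $\|I_Nv\|_{Y^1_{0,T}}\le\delta'$ should let the damping absorb all the growth.

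\emph{Step 1: energy identity.} Apply $I_N$ to (\ref{3-2}) and take the inner product with $I_Nz$, exactly as in the proof of Lemma \ref{le3-2}. This gives
\begin{equation*}
\frac{d}{dt}\|I_Nz\|_{H^1}^2+2\|I_Nz\|_{H^1}^2=-\bigl(I_N(zz_x+(vz)_x+vv_x),I_Nz\bigr).
\end{equation*}
Bound the right-hand side with the trilinear estimate Lemma \ref{le3-4}:
\begin{equation*}
\text{RHS}\le CN^{-\frac32+}\bigl(\|I_Nz\|_{H^1}^3+\|I_Nv\|_{H^1}\|I_Nz\|_{H^1}^2+\|I_Nv\|_{H^1}^2\|I_Nz\|_{H^1}\bigr).
\end{equation*}
Use Young's inequality on the last term, keeping part of it as $\tfrac12\|I_Nz\|_{H^1}^2$.

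\emph{Step 2: bootstrap.} Assume $\|I_Nz\|_{H^1}\le 2\|I_Nv\|_{Y^1_{0,T}}$ holds up to some time. Then the cubic and quadratic terms are bounded by $C\delta' N^{-3/2+}\|I_Nz\|_{H^1}^2$. For $\delta'$ small or $N$ large this is at most $\tfrac12\|I_Nz\|_{H^1}^2$, so
\begin{equation*}
\frac{d}{dt}\|I_Nz\|_{H^1}^2+\|I_Nz\|_{H^1}^2\le C\|I_Nv(t)\|_{H^1}^2 .
\end{equation*}
Use the integrating factor, together with $z(0)=0$ and the sup/$L^2$-in-time parts of $\|I_Nv\|_{Y^1}$. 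This gives $\sup_t\|I_Nz(t)\|_{H^1}\le C\|I_Nv\|_{Y^1}$ on the relevant interval, and the bootstrap closes by continuity. Also integrate the differential inequality in time to control the $L^2_tH^1$ part of the $Y^1_{\tau,T}$ norm.

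\emph{Step 3: conclusion.} Insert the bound on $\|I_Nz(\tau)\|_{H^1}$ into (\ref{3-7}). Alternatively, derive the $Y^1_{\tau,T}$ bound directly from Step 2 applied on $[\tau,\tau+T]$.

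\emph{Main obstacle.} The difficulty is the constant. The statement has constant $1$, whereas (\ref{3-7}) and Step 2 naturally give a factor $2$ or $C$. To get a sharp constant I would use the exponential factor $e^{-(t-\tau)}$ from the damping. I would also use the factor $N^{-3/2+}$ (taking $N$ large) and $\delta'$ to make the nonlinear contribution small. Even so, the linear forcing term $vv_x$ only contributes $C_T\|I_Nv\|^2\le C_T\delta'\|I_Nv\|$, which is small. So the main point is to avoid the crude bound (\ref{3-7}) and estimate the Duhamel formula $\tilde G$ directly via Lemma \ref{le3-3}. Each term there is at least quadratic in $(z,v)$, hence bounded by $C_T\delta'\|I_Nv\|_{Y^1}\le\|I_Nv\|_{Y^1}$ once $\delta'\le 1/C_T$. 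The remaining issue is the propagated datum $e^{-(t-\tau)}I_Nz(\tau)$, which is handled by the small bound on $\|I_Nz(\tau)\|_{H^1}$ from Steps 1–2.
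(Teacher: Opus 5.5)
Your continuous-time energy bootstrap is a genuinely different route from the paper's. The paper iterates the Duhamel formula on the mesh $\{kT\}$ with contraction factor $\zeta=e^{-T}+C_T\delta'$, passes from mesh values to windows through (\ref{3-7}), and invokes an argument from the literature for $\tau\notin T\mathbb{N}$; yours would handle every $\tau$ at once. However, two steps fail as written.

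The first is the hypothesis. Step 2 integrates $e^{-(t-s)}\|I_Nv(s)\|_{H^1}^2$ over $[0,t]$ with $t$ up to $\tau+T$. To close the bootstrap it needs $\|I_Nv(s)\|_{H^1}\le\delta'$ for all such $s$, but $\|I_Nv\|_{Y^1_{0,T}}\le\delta'$ says nothing about $v$ after time $T$. No cleverer argument can repair this, because the statement taken literally is false. Take $\phi=0$ and $f$ vanishing on $[0,T]$ but not on $[\tau,\tau+T]$. Then $v\equiv0$ on $[0,T]$, so the right-hand side is $0$. Yet $z\equiv0$ on $[\tau,\tau+T]$ would force $\partial_x(v^2)\equiv0$ there by (\ref{3-2}), hence $v\equiv0$, hence $f\equiv0$ by (\ref{3-1}). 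The paper's own proof tacitly uses ``$\|I_Nv\|_{Y^1_{\tau,T}}<\delta'$ for any $\tau>0$''. You should make this explicit: assume $\sup_{\sigma\ge0}\|I_Nv\|_{Y^1_{\sigma,T}}\le\delta'$ (Lemma \ref{le3-2} supplies this when $\phi$ and $f$ are small), and put that supremum on the right-hand side of the conclusion.

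The second is that the bootstrap does not close as written. After Young's inequality you replace the forcing by $C\|I_Nv\|_{H^1}^2$, which discards the quadratic gain. The integrating factor then only gives $\|I_Nz\|_{H^1}\le C\|I_Nv\|$, with no reason for $C<2$. For the same reason the ``small bound on $\|I_Nz(\tau)\|_{H^1}$'' used in Step 3 is not available. Since $\|I_Nz\|_{Y^1_{\tau,T}}\ge\|I_Nz(\tau)\|_{H^1}$ (take $t=\tau$ in the supremum), a bound $\|I_Nz(\tau)\|_{H^1}\le C\|I_Nv\|$ with $C\ge1$ already rules out the constant $1$.

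Keep the structure instead. Put $X=\|I_Nz\|_{H^1}$, $V=\|I_Nv\|_{H^1}$ and $\epsilon=\sup_{s\ge0}V(s)\le\delta'$. The energy identity gives
\[
X'+X\le C\bigl(X^2+VX+V^2\bigr),
\]
so under $X\le2\epsilon$ and $\delta'$ small you get $X'+\tfrac12X\le CV^2$. Hence $X\le2C\delta'\epsilon<2\epsilon$, which closes the bootstrap since $X(0)=0$. Then $\|I_Nz\|_{Y^1_{\tau,T}}\le C(1+\sqrt T)\delta'\epsilon\le\epsilon$ for $\delta'$ small depending on $T$. This gives the constant $1$ directly, without (\ref{3-7}) or $\tilde G$.

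Finally, take the smallness from $\delta'$, not from $N$. The $N^{-3/2+}$ gain in Lemma \ref{le3-4} comes from the cancellation $\int(I_Nw)^2\partial_xI_Nw\,dx=0$. It is not available for the mixed terms $(I_N(vz)_x,I_Nz)$ and $(I_N(vv_x),I_Nz)$: if $u,v,w$ have Fourier support in $|\xi|\le N/4$, then $I_N$ acts as the identity and $(\partial_x(uv),w)$ does not decay in $N$. All you need is the $N$-independent bound $C\|I_Nu\|_{H^1}\|I_Nv\|_{H^1}\|I_Nw\|_{H^1}$. You get it by writing the pairing as the $H^1$ inner product of $(I-\partial_{xx})^{-1}I_N(uv)_x$ with $I_Nw$ and applying the bilinear estimate quoted in the proof of Lemma \ref{le3-3}.
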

{\bf Proof.} From mild solution form defined on $[\tau, \tau+t]$, one writes

\begin{eqnarray*}
I_Nz(\tau+t)&=&e^{-t}I_Nz(\tau)-\int_{\tau}^{\tau+t}e^{-(\tau+t-s)}(I-\partial_{xx})^{-1}I_N(zz_x+zv_x+z_xv)ds\\
&&\quad-\int_{\tau}^{\tau+t}e^{-(\tau+t-s)}(I-\partial_{xx})^{-1}I_Nvv_xds\\ &=&W(t)z(\tau) + P(\tau+t, \tau, z(\tau), v)+Q(\tau+t, \tau, v)
\end{eqnarray*}

On mesh $\{T, 2T, 3T, \cdots, kT, \cdots\}$ with general $k\in \mathbb{N}$ and define $I_Nz_k = I_Nz(kT)$, we have

\begin{eqnarray*}
I_Nz_k = W(T)I_Nz_{k-1} + P(kT, (k-1)T, I_Nz_{k-1}, I_Nv) + Q(kT, (k-1)T, I_Nv).
\end{eqnarray*}

Applying Lemma \ref{le3-3} and Cauchy-Schwartz inequality, we infer that 

\begin{eqnarray*}
\|I_Nz_k\|_{H^1}\le(e^{-T}+C_T\|I_Nz_{k-1}\|)\|I_Nz_{k-1}\|_{H^1} + C_T\|I_Nv\|^2_{H^1}.
\end{eqnarray*} 

Owing to Lemma \ref{le3-5}, $I_Nz$ is sufficiently small in $Y^1_{\tau, T}$ as $\|I_Nv\|_{Y^1_{\tau, T}}<\delta'$ for any $\tau>0$. Denote $\zeta=e^{-T}+C_T\delta'$, then 

\begin{eqnarray*}
\|I_Nz_k\|_{H^1}&\le& \zeta\|I_nz_{k-1}\| + C_T\|I_Nv\|^2_{Y^1_{(k-1)T, T}}\\
&\le& \zeta^2\|I_Nz_{k-2}\|_{H^1}+C_T\|I_Nv\|^2_{Y^1_{(k-1)T, T}}\\
&\vdots&\\
&\le&\zeta^kC_T\|I_Nv\|^2_{Y^1_{(k-1)T, T}}
\end{eqnarray*} as $z_0\equiv 0$.\\

Relying on the above result of $I_Nz_k$ and estimate (\ref{3-7}), $\|I_N\|_{kT,T}$ is proved immediately. Any $\tau\neq kT$ also holds same estimate using the argument in Lemma 3.4 of \cite{WZ}, and we complete the proof. 
\hspace*{\fill}$\Box$\\

To approach the stability of desired solution, we need a decay property of error $w$ to the BBM equation. This attribute leads to asymptotic periodicity of any non-periodic solution. From dynamics of PDEs, this asymptotic time-periodicity is caused by force $f$ specified to be time-periodic. The error equation is proposed on $\mathbb{R}_+\times \mathbb{R}$ as

\begin{eqnarray}\label{3-8}
\begin{cases}
w_t - w_{xxt} +w - w_{xx} + (aw)_x = 0,\\
w(x, 0) = \psi(x).
\end{cases}
\end{eqnarray}

In it, the reference $a$ is a function of $(x,t)$, satisfying certain type of smallness. 

\begin{Lemma}\label{le3-7}
If $\|I_Na\|_{Y^1_{\tau, T}}$ is bounded for $\tau\in \mathbb{R}_+$, $N$ can be selected to be $N_0$ so large that a constant $\gamma_0 = 1 - CN_0^{-{3\over 2}+}\|I_Na\|_{H^1}>0$, then $\|w\|_{Y^\ell_{\tau, T}}\lesssim e^{-2\gamma_0\tau}$.
\end{Lemma}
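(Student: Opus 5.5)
Apply $I_{N_0}$ to (\ref{3-8}) and run a modified energy estimate for $I_{N_0}w$ in $H^1$, as in the proof of Lemma \ref{le3-2}. Multiplying
$$\partial_t I_{N_0}w-\partial_t\partial_{xx}I_{N_0}w+I_{N_0}w-\partial_{xx}I_{N_0}w+I_{N_0}(aw)_x=0$$
by $I_{N_0}w$ and integrating over $\mathbb{R}$ gives
\begin{eqnarray*}
\frac{1}{2}\frac{d}{dt}\|I_{N_0}w\|_{H^1}^2+\|I_{N_0}w\|_{H^1}^2=-(I_{N_0}(aw)_x, I_{N_0}w).
\end{eqnarray*}
The right side is exactly the trilinear form of Lemma \ref{le3-4}, taken with $u=a$, $v=w$ and test function $w$. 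It is therefore bounded by $CN_0^{-\frac32+}\|I_{N_0}a\|_{H^1}\|I_{N_0}w\|_{H^1}^2$. Absorbing this into the dissipation yields the differential inequality
\begin{eqnarray*}
\frac{d}{dt}\|I_{N_0}w\|_{H^1}^2+2\bigl(1-CN_0^{-\frac32+}\|I_{N_0}a(t)\|_{H^1}\bigr)\|I_{N_0}w\|_{H^1}^2\le 0 .
\end{eqnarray*}

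Next, use the hypothesis that $\|I_Na\|_{Y^1_{\tau,T}}$ is bounded uniformly in $\tau$. This gives $\sup_t\|I_Na(t)\|_{H^1}\le K$, and here one must check that $K$ does not grow with $N$. That is plausible when $a$ is built from $u$ and $\tilde u$, whose $I_N$-norms are controlled by Lemma \ref{le3-5}. Choose $N_0$ so large that $CN_0^{-\frac32+}K<1$, and set $\gamma_0=1-CN_0^{-\frac32+}K>0$. Gronwall's inequality then gives
$$\|I_{N_0}w(t)\|_{H^1}^2\le e^{-2\gamma_0 t}\|I_{N_0}\psi\|_{H^1}^2.$$
By Lemma \ref{le3-1}, $\|I_{N_0}\psi\|_{H^1}\le CN_0^{1-\ell}\|\psi\|_{H^\ell}$ with a constant now fixed because $N_0$ is fixed. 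The same lemma gives $\|w(t)\|_{H^\ell}\le C\|I_{N_0}w(t)\|_{H^1}$. Combining these produces pointwise-in-time exponential decay of $\|w(t)\|_{H^\ell}$.

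To reach the $Y^\ell_{\tau,T}$ norm, take the supremum over $[\tau,\tau+T]$ and integrate the squared bound over the same interval. This gives $\|w\|_{Y^\ell_{\tau,T}}\lesssim (1+\sqrt T)\,e^{-\gamma_0\tau}$. To match the rate $2\gamma_0$ written in the statement, I would read the statement in terms of squared norms, which is the convention the energy identity produces, as in Lemma \ref{le3-2}. Alternatively one can simply rename $\gamma_0$; the implicit constant depends on $N_0$, $T$ and $\|\psi\|_{H^\ell}$.

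The main obstacle is the nonlinear-in-$a$ term. Lemma \ref{le3-4} must apply with a time-dependent $a$ and a different test function, and the factor $N_0^{-\frac32+}$ must genuinely beat $\|I_{N_0}a\|_{H^1}$. If the bound on $\|I_{N}a\|_{H^1}$ grows with $N$, for instance like $N^{1-\ell}$, then one needs $-\tfrac32+(1-\ell)<0$. This holds for all $\ell\in[0,1)$, so I would first check that the choice of $N_0$ is still possible under this growth.
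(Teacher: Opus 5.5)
Your outline matches the paper's proof almost verbatim: apply $I_{N_0}$, take the $H^1$ energy identity, invoke Lemma \ref{le3-4} with $(u,v,w)=(a,w,w)$, then Gronwall and Lemma \ref{le3-1}. Your remark on the rate is correct: for $a=0$ one has $w=e^{-t}\psi$, so $e^{-2\gamma_0 t}$ can only describe the squared norm.

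The genuine gap is the step you yourself called the main obstacle. You then checked it only for the $N$-growth of $\|I_Na\|_{H^1}$. The bound $|(I_N(aw)_x,I_Nw)|\le CN^{-3/2+}\|I_Na\|_{H^1}\|I_Nw\|_{H^1}^2$ is false, so Lemma \ref{le3-4} cannot be invoked in this asymmetric configuration. The paper's proof fails at the same point. To see this, take a real Schwartz $g\not\equiv0$ with $\widehat g$ supported in $[-\tfrac12,\tfrac12]$, and set $b=(g^2)_x$. For $N\ge 2$ the multiplier $m_N$ equals $1$ on the Fourier supports of $b$, $g$ and $bg$, so
\[
(I_N(bg)_x,I_Ng)=((bg)_x,g)=\tfrac12\int b_x g^2\,dx=-\tfrac12\int\bigl|(g^2)_x\bigr|^2dx\neq 0 .
\]
This is independent of $N$, while the claimed right-hand side tends to $0$ as $N\to\infty$.

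The $N^{-3/2+}$ gain of Bona--Tzvetkov and M. Wang exists only for the symmetric form $(I_N(w^2)_x,I_Nw)$. There one may subtract $\int (I_Nw)^2\partial_xI_Nw\,dx=0$ and is left with a pure commutator. In your setting the same splitting gives
\[
(I_N(aw)_x,I_Nw)=\tfrac12\int \partial_x(I_Na)\,(I_Nw)^2\,dx+\bigl(\partial_x[I_N(aw)-I_Na\,I_Nw],\,I_Nw\bigr),
\]
and the first term does not vanish. The best available bound is $\bigl|\tfrac12\int\partial_x(I_Na)(I_Nw)^2dx\bigr|\le\tfrac12\|I_Na\|_{H^1}\|I_Nw\|_{H^1}^2$, via $\|I_Nw\|_{L^4}^2\le\|I_Nw\|_{H^1}^2$, and it has no decay in $N$.

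So enlarging $N_0$ cannot make the coupling term small relative to the damping; large $N_0$ can at best help with the commutator term. To close your Gronwall argument you need a smallness hypothesis on $a$ itself, for instance $\sup_t\|I_{N_0}a(t)\|_{H^1}$ below a constant independent of $N_0$. That is available in the small-$\delta$ regime of Theorem \ref{th2-2}, at fixed $N_0$, since $\|I_{N_0}a\|_{H^1}\lesssim N_0^{1-\ell}\|a\|_{H^\ell}$. It does not follow from mere uniform boundedness of $\|I_Na\|_{Y^1_{\tau,T}}$, so this argument does not reach the lemma as stated.
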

{\bf Proof.} Applying $I_N$, we have 

\begin{eqnarray*}
\partial_t I_N w - \partial_t I_Nw_{xx} + I_Nw - I_Nw_{xx} + I_N(aw)_x = 0,
\end{eqnarray*}

whence $L^2$ energy identity gives $H^1$-estimate

\begin{eqnarray*}
{1\over 2}{d\over dt}\|I_Nw(t)\|^2_{H^1} + \|I_Nw(t)\|^2_{H^1} + (I_N(aw)_x, I_Nw) = 0. 
\end{eqnarray*}

From Lemma \ref{le3-4}, there holds a trilinear estimate $$|(I_N(aw)_x, I_Nw)|\lesssim N^{-{3\over 2}+}\|I_Nw\|^2_{H^1}\|I_Na\|_{H^1}.$$

Thus if $N = N_0$ being so large that $\gamma_0 = 1 - CN_0^{-{3\over 2}+}\|I_{N_0} a\|_{H^1}>0$, then exponential decay follows as

$$\|I_Nw(t)\|_{H^1}\le e^{-2\gamma_0 t}\|I_N\psi\|_{H^1}^2$$

for any $t>0$. On the other term of $\|I_Nw\|_{Y^1_{\tau, T}}$ with $\tau >0$, 

\begin{eqnarray*}
2\int_{\tau}^{\tau+T}\|I_Nw(s)\|_{H^1}^2\le \|I_Nw(\tau)\|^2_{H^1} + \|I_Nw(\tau+T)\|_{H^1}^2 + 2CN^{-{3\over 2}+}\int_{\tau}^{\tau+T}\|I_Na(s)\|_{H^1}\|I_Nw(s)\|^2_{H^1}ds.
\end{eqnarray*} Owing to $N=N_0$ and $\gamma_0 = 1 - CN_0^{-{3\over 2}+}\|I_{N_0}a\|_{H^1}>0$, integration term on the right side is absorbed to the left, therefore combining with estimate of $\|I_Nw(t)\|_{H^1}$ just obtained, we can infer the result owing to Lemma \ref{le3-1} with decay rate $2\gamma_0$. 
\hspace*{\fill}$\Box$\\

\section{Proof of main theorems}\label{sec4}
\setcounter{equation}{0}

\noindent {\bf Proof of Theorem \ref{th2-1}.} Given $\tau, T>0$, owing to Lemmas \ref{le3-2} and \ref{le3-5}, we obtain the sum $I_Nu\in Y^{1}_{\tau, T}$ thus $u\in Y^\ell_{\tau, T}$ by Lemma \ref{le3-1}. \hspace*{\fill}$\Box$\\

\noindent {\bf Proof of Theorem \ref{th2-2}.} First, we would prove the existence of periodic solution. Note that the local well-posedness has been proved by Theorem \ref{th2-1}. \\

In this part, we will prove asymptotic periodicity, i.e., $u$ generated by any $\phi$ is asymptotically time-periodic. Let $w = u(t+\theta, x)-u(t,x)$, then $w$ exactly follows (\ref{3-8}) with $\psi(x) = u(\theta, x)-\phi(x), a={1\over 2}(u(t,x)+u(t+\theta, x))$. If $\delta$ is sufficiently small, $\|I_Na\|_{H^1}$ possesses smallness such that $\gamma_0>0$. Lemma \ref{le3-7} explicitly show the asymptotic periodicity as $\tau\rightarrow \infty$:

$$\|u(\cdot+\theta)-u(\cdot)\|_{Y^{\ell}_{\tau, T}}\lesssim e^{-2\gamma_1\tau}.$$

If $u_k=u(k\theta, x), k\in \mathbb{N}$, one can show that sequence $\{u_k\}_k$ is of Cauchy in $H^{\ell}$. Passing to limit, $\displaystyle\lim_{k\rightarrow\infty}u_k = \tilde\phi$ which is unique and we extract in next step as the very initial data to generate precisely periodic solution $\tilde u\in H^{\ell}$. It is noteworthy to point out that  detailed fashion in \cite{BSZ2} by Bona et al and later followers \cite{UsmanZ1, UsmanZ2,WZ,GWX} to help approach this Cauchy sequence result. It can be seen from proof that size of $\tilde\phi$ must be small thus $\tilde u$ is small in norm. \\

There also holds uniqueness of $\tilde u$. If there are two distinct periodic solutions $\tilde u_1$ and $\tilde u_2$. Let $w=\tilde u_1(t,x) - \tilde u_2(t,x)$ which satisfies (\ref{3-8}) again. Since Lemma {\ref{le3-7}}, $w\equiv 0$ as initial data $\psi = \tilde\phi-\tilde\phi=0.$\\

Second, we address the local stability of the solution. Let $w = u(x, t)-\tilde u(x,t), a={1\over 2}(u(t,x)+\tilde u(t, x))$. The $w$ satisfies (\ref{3-8}). According to smallness condition posed in Lemma \ref{le3-2}, $I_Na$ is able to inherit smallness on $(\phi, f)$ from $u(t, x)$ and $\tilde u(t, x)$ such that $\gamma_1>0$. Therefore, exponential decay of $w$ from Lemma \ref{le3-7} leads to the desired stability.\\

It turns out that this local stability can be improved to global stability alike in dynamical behavior of solutions to KdV, Burgers, and 2D hydrodynamical models  (see e.g. \cite{BSZ3, UsmanZ1, WZ, GWX}). This is achieved by global absorbing property of $I_Nu$. In fact, since

\begin{eqnarray*}
\partial_t I_Nu - \partial_tI_Nu_{xx} + I_Nu  - I_Nu_{xx} + I_N(uu_x)= I_Nf,
\end{eqnarray*}

the $L^2$ energy estimate leads to 

\begin{eqnarray*}
{1\over 2}{d\over dt}\|I_Nu(t)\|_{H^1}^2 + \|I_Nu(t)\|_{H^1}^2 = (I_N(f-uu_x), I_Nu).
\end{eqnarray*}



Note that 

\begin{eqnarray*}
|(I_Nf, I_Nf)|&\le& {1\over 2}\|f\|^2+{1\over 2}\|I_Nu\|^2,\\
|(I_Nuu_x, I_Nu)|&\le& CN^{-{3\over 2}+}\|I_Nu\|^3_{H^1},
\end{eqnarray*}

the latter of which follows the same fashion in Lemma \ref{le3-7} or Lemma \ref{le3-4}. If we select $N_1$ sufficiently large, so that $\gamma_1 = {1} - CN_1^{-{3\over 2}+}\|I_Nu\|_{H^1}>0$, then we arrive at

\begin{eqnarray*}
{1\over 2}{d\over dt}\|I_Nu(t)\|_{H^1}^2+\gamma_1\|I_Nu(t)\|^2 \le {1\over 2}\|f(t)\|^2,
\end{eqnarray*}

whence the absorbing inequality holds as

\begin{eqnarray*}
\|I_Nu(t)\|^2\le e^{-{2\gamma_1 t}}\|I_N\phi\|^2_{H^1}+\int_0^te^{-2\gamma_1{(t-s)}}\|f(s)\|^2ds\le e^{-{2\gamma_1 t}}\|I_N\phi\|^2_{H^1}+{1\over 2\gamma_1}\sup_{t\in[0, \infty]}\|f(t)\|^2. 
\end{eqnarray*}

This absorbing indicates that, for any $I_N\phi\in H^1$, there is $\tau>0$ sufficiently large so that $I_Nu(t)$ will be absorbed to a small ball around $0$ in $H^1$ as $t>\tau_0$ when $\gamma_0>0$, the local stability takes over the behavior of the flow, which turns out to be a global one. 
\hspace*{\fill}$\Box$\\

\begin{Remark}
Similar work can be formulated to prove $H^{\ell}$-periodic solutions to BBM-Burgers equation posed on 1D torus $\mathbb{T}$:

\begin{eqnarray*}
u_t - u_{txx} - u_{xx} + u_x + uu_x = f.
\end{eqnarray*}

In this discussion, the large-time stability behavior is granted by exponential decays of semigroup generated by Burgers term $-u_{xx}$; and there is no need to add zero-order term. The large-time behavior -- existence of global attractors, was proved in \cite{MWang2}.  
\end{Remark}

\begin{Remark}
We also could continue to establish the result in energy spaces $H^\ell, \ell\in[1, 2].$ This result is same as in preprint \cite{LW} by same present authors. 
\end{Remark}

\begin{Remark}
This paper is devoted to show the work which pushes the existence results of solution to BBM equation on $\mathbb{R}$ down to low-regularity spaces (lower than classical energy space). As pointed in \cite{Tzvet}, I-energy method pursuing LWP only can work in sharply lower spaces $\ell = 0$ of $H^{\ell}$, but can not lower as $\ell<0$. On the counter part: damped KdV equations, the existence of global attractors have been found not only in current spaces but also even lower ones (see e.g. \cite{Goube1,Goube2, Tsugawa}).  
\end{Remark}

\section*{Acknowledgment}

\noindent Authors would use this opportunity to thank referees and editor's diligent work and precious comments. \\

\noindent One of the authors, Taige Wang, is constantly supported by Faculty Development Fund granted by College of Arts and Sciences, University of Cincinnati, and Taft Award by Taft Research Center, University of Cincinnati.

\end{document}